\documentclass{amsart}
\usepackage{amssymb,amsmath,latexsym}
\usepackage{amsthm}
\usepackage{fontenc}
\usepackage{amssymb}
% Page length commands go here in the preamble
\numberwithin{equation}{section}

\newtheorem{theorem}{Theorem}[section]

 % 1.5 denotes double spacing. Changing it will change the spacing

\setlength{\parindent}{0in} 
\begin{document}
\author{Alexander E Patkowski}
\title{More on some non-gaussian wave packets}

\maketitle
\begin{abstract}We expand on a previous study, offering a generalized wave function associated with the parabolic cylinder function and a connection with a two-particle position-space wave function. We also provide an explicit formula for a wave function associated with recent work by the present author and M. Wolf. \end{abstract}

% AMS keywords (used in AMS journals)
\keywords{\it Keywords: \rm Fourier Integrals; Schrodinger's equation; Free particle wave packet}

% AMS subject classifications (used in AMS journals)
\subjclass{ \it 2010 Mathematics Subject Classification 42A38, 35Q40, 11F20.}

\section{Introduction}
In [2, 9] Fourier integrals involving the Gaussian and hyperbolic functions are discussed in detail, offering evaluations and interesting transformation properties related to Gauss sums. Recall [10] that the solution to the time-dependent Schr$\ddot{o}$dinger equation, which is $i\hslash\frac{\partial \psi(x,t)}{\partial t}=-\frac{\hslash^2}{2m}\frac{\partial^2 \psi(x,t)}{\partial x^{2}},$ $\hslash$ being Planck's constant, is known to be $\psi(x,t).$ That is, the function $\psi(x,t)$ is the Fourier transform of $\phi(z)e^{-i\hslash z^2t/2m},$ where $\phi(z)$ is a momentum amplitude. \par
In [7] we discussed the work of Glasser [4], and offered more on transformation properties as well as asymptotic expansions for special instances of $\psi(x,t).$ The paper is organized as follows. We first offer an explicit formula to a wave function (unnormalized) which had its asymptotic behavior established in [7], and accomplish this by using recent work by the present author and M. Wolf. In the following section we offer a generalization of a wave function of Glasser through using known Laplace transforms. In the last section we establish an interesting two-particle position-space wave function that may be expressed as a linear combination of the single-particle wave given in [4]. \par We shall refer to the Laplace transform of $f(z)$ throughout the paper to be, as usual,
\begin{equation}\int_{0}^{\infty}e^{-pz}f(z)dz=L_f(p),\end{equation}
and its inverse
\begin{equation}\frac{1}{2\pi i}\int_{c-i\infty}^{c+i\infty}e^{pt}L_f(p)dp=f(t), \end{equation}
and $c\in\mathbb{R}$ is chosen such that the complex integral contour is in the region where $L_f(p)$ converges.
\section{A wave function associated with generalized Glaisher-Ramanujan type integrals}
In a paper by the present author and M. Wolf [8], we gave interesting integral evaluations which generalize those found in [3]. In [7] we gave the asymptotic formula (as well as a general method) for the $b=0$ case of Theorem 2.1. Our proof will use [8, Theorem I. 1] and ideas from Ramanujan's work [9]. For the rest of this paper we shall write the primitive Dirichlet character $\chi$ to be defined as
$$\chi(n):=\begin{cases} 0,& \text {if } n=0\pmod{2},\\ 1, & \text{if } n=1\pmod{4}, \\ -1, & \text{if } n=3\pmod{4}.\end{cases}$$
All our infinite sums involving $\chi$ should be understood to be over $n\ge1$ throughout this paper.
\begin{theorem} Define $g(\alpha_2,\lambda,t):=(t\pi)^{-1/2}\int_{0}^{\infty}\sin(\alpha_1\lambda)e^{-(\alpha_1+\alpha_2)^2/(4t)}d\alpha_1.$ Put $2A(b,c)^2=\sqrt{b^4+c^2}+b^2,$ and $2B(b,c)^2=\sqrt{b^4+c^2}-b^2,$ then
\begin{equation}\int_{0}^{\infty}\cos(az)e^{-iz^2t\hslash/2m}\frac{\cosh(\frac{\pi}{2}A(b,z))\cos(\frac{\pi}{2}B(b,z))}{\cosh^2(\frac{\pi}{2} A(b,z))-\sin^{2}(\frac{\pi}{2}B(b,z))}dz\end{equation}
$$=\sum_{n}\chi(n)ng(a/\sqrt{i},(b^2+n^2)/(2\sqrt{i}),t\hslash/2m)+\sum_{n}\chi(n)ne^{-a(b^2+n^2)+i(b^2+n^2)^2t\hslash/8m}.$$
\end{theorem}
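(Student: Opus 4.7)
The plan is to combine [8, Theorem~I.1] with the Wick-rotation technique used by Ramanujan in [9] for oscillatory Gaussian integrals. Observe that the kernel
$K(b,z):=\cosh(\tfrac{\pi}{2}A(b,z))\cos(\tfrac{\pi}{2}B(b,z))/[\cosh^2(\tfrac{\pi}{2}A(b,z))-\sin^2(\tfrac{\pi}{2}B(b,z))]$
on the left-hand side of (2.1) is the same Glaisher--Ramanujan kernel treated in [8, Theorem~I.1]. There, $\int_0^\infty F(z)K(b,z)\,dz$ is evaluated for admissible test functions $F$ as a Dirichlet-type series indexed by $\chi(n)n$ with spectral parameters $b^2+n^2$. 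Our target identity is what this machinery produces when specialised to $F(z)=\cos(az)e^{-iz^2 t\hslash/2m}$.

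First, I would substitute $F(z)=\cos(az)e^{-iz^2 t\hslash/2m}$ into [8, Theorem~I.1] and reduce the problem to evaluating a transform of $F$ at the discrete points $b^2+n^2$. To handle the oscillatory Schr\"odinger phase I would perform the rotation $z\mapsto z/\sqrt{i}$, which converts $e^{-iz^2 t\hslash/2m}$ into a convergent Gaussian and rescales the cosine argument to $a/\sqrt{i}$ and the spectral parameter to $(b^2+n^2)/(2\sqrt{i})$. These are precisely the arguments of $g(\alpha_2,\lambda,t)$ that appear in the first sum on the right-hand side of (2.1).

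Next, I would split the rotated integrand into two pieces by completing the square in the Gaussian exponent, in the spirit of the residue-plus-remainder decomposition used by Ramanujan in [9]. The closed piece, obtained by evaluating at the shifted stationary point, yields the second sum $\sum_n \chi(n)n\,e^{-a(b^2+n^2)+i(b^2+n^2)^2 t\hslash/8m}$; direct substitution confirms that the Schr\"odinger phase evaluated at $z=-i(b^2+n^2)/2$ produces $e^{+i(b^2+n^2)^2 t\hslash/8m}$, which accounts for the factor $1/8m$ in the exponent. The remaining piece is a sine-transform integral whose integrand matches term-by-term the definition of $g(\alpha_2,\lambda,t)$ with the parameter assignments above. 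Combining both contributions with weights $\chi(n)n$ delivered by [8, Theorem~I.1] yields the full identity.

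The main obstacle will be justifying the Wick rotation and the accompanying interchange of summation and integration. The Glaisher kernel decays roughly like $e^{-\pi\sqrt{z/2}/2}$ as $|z|\to\infty$ along the rays swept out by the rotation, which is enough to discard the arc at infinity; one then needs this decay to be uniform in $b$ on compact sets so that the resulting Dirichlet series in $n$ converges absolutely and the two sums on the right-hand side may be treated independently. For $b=0$ the corresponding asymptotic estimates were already carried out in [7], and the same approach should extend to the general case with only cosmetic modifications.
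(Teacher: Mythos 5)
Your outline takes a genuinely different route from the paper: the paper never Wick-rotates, but instead multiplies the integrand by $1/(z^2+\beta'^2)$ with $\beta'^2=-ip$, recognises this factor as the Laplace transform (in the time variable $\tau=t\hslash/2m$) of the Schr\"odinger phase, evaluates the resulting $z$-integral by Parseval's theorem for Fourier cosine transforms using (2.2) together with [8, Theorem I.1], and then inverts the Laplace transform, with the function $g$ arising from a purpose-built inverse pair (2.12)--(2.13). A direct time-domain computation such as yours is in principle viable, but as written it has two genuine gaps. First, [8, Theorem I.1] is a single integral evaluation (the cosine transform of $\eta^{3}(4ix/\pi)e^{-b^2x}$ equals $\tfrac{\pi}{4}$ times the kernel); it is not a machinery that evaluates $\int_0^\infty F(z)K(b,z)\,dz$ for general admissible $F$. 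To use it the way you intend you must first invert the cosine transform, obtaining $K(b,z)=\tfrac{4}{\pi}\sum_n\chi(n)n\,(b^2+n^2)/\bigl((b^2+n^2)^2+z^2\bigr)$, and then justify interchanging this sum with the $z$-integral; that interchange, not the Wick rotation, is where the Dirichlet series actually enters, and it is the analogue of the Parseval step in the paper.

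Second, and more seriously, the step that is supposed to produce the two sums is asserted rather than performed, and the one check you offer is internally inconsistent. The point $z=-i(b^2+n^2)/2$ is neither a pole of the Lorentzian factors $1/\bigl((b^2+n^2)^2+z^2\bigr)$ (those sit at $z=\pm i(b^2+n^2)$) nor a saddle of $\cos(az)e^{-i\tau z^2}$, so there is no reason to evaluate the phase there; moreover the same substitution applied to $\cos(az)$ gives $\cosh\bigl(a(b^2+n^2)/2\bigr)$, which contains the growing exponential $e^{+a(b^2+n^2)/2}$ rather than the required $e^{-a(b^2+n^2)}$. Likewise the claim that the leftover piece ``matches term-by-term'' the definition of $g$ is precisely the computation that must be done: $g$ has a specific double-integral form, and one must exhibit the change of variables (or, as the paper does, the Laplace-transform pair (2.13)) that produces it from integrals of the shape $\int_0^\infty e^{-(b^2+n^2)x}e^{-(x\pm a)^2/(4i\tau)}\,dx$ obtained after completing the square. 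Until those two computations are carried out, the proposal does not establish the identity.
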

\begin{proof} First, we write down the needed integral evaluations [5]
\begin{equation}\int_{0}^{\infty}\frac{\cos(az)\cos(zy)dz}{(z^2+\beta'^2)}dz=\begin{cases} \frac{\pi}{2\beta'}e^{-a\beta'}\cosh(\beta'y),& \text {if } y<a,\\  \frac{\pi}{2\beta'}e^{-y\beta'}\cosh(\beta'a),& \text {if } a<y,\end{cases}\end{equation}
and [8, Theorem I. 1]
\begin{equation}\int_{0}^{\infty}\eta^{3}(i4x/\pi)e^{-b^2 x}\cos(cx)dx=\frac{\pi}{4}\frac{\cosh(\frac{\pi}{2}A(b,c))\cos(\frac{\pi}{2}B(b,c))}{\cosh^2(\frac{\pi}{2} A(b,c))-\sin^{2}(\frac{\pi}{2}B(b,c))},\end{equation}
where $2A(b,c)^2=\sqrt{b^4+c^2}+b^2,$ and $2B(b,c)^2=\sqrt{b^4+c^2}-b^2.$ The reader may refer to [1] for a definition of Dedekind's eta function, but for our purposes in this article we need only to note that $\eta^{3}(i4x/\pi)=\sum_{n}\chi(n)ne^{-n^2x}.$ Now Parseval's theorem for Fourier cosine transforms tells us that we may apply (2.2) and (2.3) to get that 
\begin{align}&\int_{0}^{\infty}\frac{\cos(az)}{z^2+\beta'^2}\frac{\cosh(\frac{\pi}{2}A(b,z))\cos(\frac{\pi}{2}B(b,z))}{\cosh^2(\frac{\pi}{2} A(b,z))-\sin^{2}(\frac{\pi}{2}B(b,z))}dz\\
&=\frac{4}{\pi}\sum_{n}\chi(n)n\int_{a}^{\infty}e^{-(b^2+n^2)z}\left(\frac{\pi}{2\beta}e^{-\beta'z}\cosh(a\beta')\right)dz\\
&+\frac{4}{\pi}\sum_{n}\chi(n)n\int_{0}^{a}e^{-(b^2+n^2)z}\left(\frac{\pi}{2\beta}e^{-\beta'a}\cosh(z\beta')\right)dz \\
&=\frac{4}{\pi}\sum_{n}\chi(n)n\frac{e^{-(b^2+n^2)a-\beta'a}}{\beta'+(b^2+n^2)}\left(\frac{\pi}{2\beta'}\cosh(a\beta')\right)\\
&+\frac{4}{\pi}\sum_{n}\chi(n)n\frac{e^{-a(\beta'-(b^2+n^2))}-1}{2(\beta'-(b^2+n^2))}\left(\frac{\pi}{2\beta'}e^{-a\beta'}\right)\\
&+\frac{4}{\pi}\sum_{n}\chi(n)n\frac{1-e^{-a(\beta'+(b^2+n^2))}}{2(\beta'+(b^2+n^2))}\left(\frac{\pi}{2\beta'}e^{-a\beta'}\right)\\
&=\frac{2}{\beta'}\sum_{n}\chi(n)n\left( e^{-a\beta'}\left(\frac{2(b^2+n^2)}{(2\beta')^2-(b^2+n^2)^2}\right)+e^{-a(b^2+n^2)}\left(\frac{4\beta'}{(2\beta')^2-(b^2+n^2)^2}\right)\right).\\
\end{align}
Equations (2.5)-(2.6) arise from applying the right-hand side of (2.2), while the sum over $\chi$ comes from the left side of (2.3), and this is found after applying Fourier inversion to (2.2). Equation (2.11) is found from some elementary rearrangement of (2.8)-(2.10). We may now put $\beta'^2=-ip,$ and apply the inverse Laplace transform to (2.11). To evaluate the first sum using (1.3), we need an inverse transform we were not able to locate in tables. First note the well-known evaluation [5], for $\Re(\alpha)\ge0,$
\begin{equation}\int_{0}^{\infty}t^{-1/2}e^{-\alpha^2/(4t)-pt}dt=\sqrt{\pi}p^{-1/2}e^{-\alpha p^{1/2}}.\end{equation}
Put $\alpha=\alpha_1+\alpha_2,$ then,
\begin{equation}\int_{0}^{\infty}t^{-1/2}\left(\int_{0}^{\infty}\sin(\alpha_1\lambda)e^{-(\alpha_1+\alpha_2)^2/(4t)}d\alpha_1\right)e^{-pt}dt=\lambda\sqrt{\pi}p^{-1/2}\frac{e^{-\alpha_2 p^{1/2}}}{p+\lambda^2}.\end{equation}
Hence if we set $L_g(p)$ to be the right side of (2.13), then we have our inverse $g(\alpha_2,\lambda,t):=(t\pi)^{-1/2}\int_{0}^{\infty}\sin(\alpha_1\lambda)e^{-(\alpha_1+\alpha_2)^2/(4t)}d\alpha_1.$ We now arrive at
$$\frac{1}{i}\sum_{n}\chi(n)ng(a/\sqrt{i},(b^2+n^2)/(2\sqrt{i}),t)+\frac{1}{i}\sum_{n}\chi(n)ne^{-a(b^2+n^2)+i(b^2+n^2)^2}.$$
\end{proof}

We believe our proof is interesting in that, one could make appropriate modifications to prove Ramanujan's wave function formulas [9] by using the Poisson summation formula for sine transforms.
\section{Generalized wave function and the parabolic cylinder function}
In this section we establish an interesting refinement to the non-gaussian wave function discussed in [4], which will involve the parabolic cylinder function, and therefore gives a subsequent connection with the harmonic oscillator. Recall that the parabolic cylinder function is a solution to the dimensionless form of the Schr$\ddot{o}$dinger equation over a finite range, and over the whole range is the Hermite polynomial. \par Recall that the parabolic cylinder function is given by
$$D_v(z)=2^{v/2}e^{-z^2/4}U(-\frac{v}{2},\frac{1}{2},\frac{z^2}{2}),$$ where $U(a,b,z)$ is the Tricomi confluent hypergeometric function of the first kind [1]. 

\begin{theorem} Suppose that $v$ is a complex parameter such that $v\neq 2n+1,$ $n\in\mathbb{N}.$ Then, we have that,
$$\int_{0}^{\infty} \frac{z^v}{\cosh(\beta z)}\cos(xz)e^{-i\hslash z^2t/2m}dz$$
$$=\pi2^{-v/2}\sum_{n}\chi(n)(n\pi i/2)^ve^{in^2\pi^2\hslash t/8m-nx\pi/2}$$
\begin{equation}+\pi2^{-v/2}\sum_{n}\chi(n)(i\hslash t/2m)^{-v/2-1/2}e^{i(x+in)^2m/(\hslash4 it)}D_{v}((x+in)\sqrt{m/(\hslash it)}).\end{equation}
\end{theorem}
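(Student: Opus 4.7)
The plan is to reduce the left-hand side of (3.1) to a sum of Laplace-type integrals $\int_{0}^{\infty}z^{v}e^{-pz^{2}-qz}\,dz$, each of which admits a closed evaluation in terms of a parabolic cylinder function. First, I would expand the $\operatorname{sech}$ factor by its exponential series,
$$\frac{1}{\cosh(\beta z)}=2\sum_{n}\chi(n)e^{-n\beta z}\qquad(\Re z>0),$$
and interchange summation with integration. The swap is legitimate once $t$ is given a small negative imaginary part so that the Gaussian $e^{-i\hslash z^{2}t/2m}$ is damped at infinity; the resulting identity then extends to the physical range of $t$ by analytic continuation.

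Next, splitting $\cos(xz)=\tfrac{1}{2}(e^{ixz}+e^{-ixz})$, each inner integral takes the shape $\int_{0}^{\infty}z^{v}e^{-pz^{2}-qz}\,dz$ with $p=i\hslash t/2m$ and $q=n\beta\mp ix$, which I would evaluate from the tables [5] by the standard formula
$$\int_{0}^{\infty}z^{\nu-1}e^{-\beta' z^{2}-\gamma z}\,dz=(2\beta')^{-\nu/2}\Gamma(\nu)\exp\!\bigl(\gamma^{2}/(8\beta')\bigr)D_{-\nu}\!\bigl(\gamma/\sqrt{2\beta'}\bigr).$$
This produces a double sum involving $D_{-v-1}$ evaluated at arguments of the form $\pm i(x\mp in)\sqrt{m/(i\hslash t)}$. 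To convert those $D_{-v-1}$ evaluations into the single $D_{v}\bigl((x+in)\sqrt{m/(\hslash it)}\bigr)$ appearing in (3.1), I would then invoke the Whittaker connection formula
$$\sqrt{2\pi}\,D_{v}(w)=\Gamma(v+1)\bigl[e^{iv\pi/2}D_{-v-1}(iw)+e^{-iv\pi/2}D_{-v-1}(-iw)\bigr],$$
exploiting the oddness $\chi(-n)=-\chi(n)$ to re-index the two branches arising from $e^{\pm ixz}$ into a single sum over odd $n$.

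The first (purely exponential) sum on the right of (3.1) should emerge as the ``dominant'' half in the Stokes splitting implicit in the connection formula: one combination of the two $D_{-v-1}$'s reassembles into $D_{v}$, while the complementary combination produces precisely the polynomial term $\chi(n)(n\pi i/2)^{v}e^{in^{2}\pi^{2}\hslash t/8m-nx\pi/2}$, which is also what one would pick up as residues of the original integrand at the poles $z=in\pi/(2\beta)$ of $\operatorname{sech}$. The main obstacle is therefore purely bookkeeping: tracking the branches of $\sqrt{i\hslash t/m}$ and the various $e^{\pm iv\pi/2}$ phases carefully so that all prefactors collapse to the specific constants $\pi 2^{-v/2}$ and $(i\hslash t/2m)^{-v/2-1/2}$ of (3.1), and verifying that the two $\Gamma(v+1)$ factors emerging from the Laplace evaluation cancel correctly against the $\Gamma(v+1)$ supplied by the Whittaker connection. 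The restriction $v\ne 2n+1$ is then seen to be exactly the condition that these $\Gamma$ factors (and the sines in $\sin(v\pi/2)$ hidden in the connection formula) remain finite.
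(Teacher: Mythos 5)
Your route is genuinely different from the paper's: the paper Laplace-transforms in the time variable $y=\hslash t/2m$, reduces the left side to $\frac{1}{2i}\int_{-\infty}^{\infty}\frac{z^v e^{ixz}}{\cosh(\beta z)(z^2-ip)}\,dz$, evaluates that by residues (picking up both the poles of $1/\cosh$ and the pole at $z=\sqrt{ip}$), and then inverts the Laplace transform term by term, the $D_v$'s entering through the table entry (3.4). Your opening steps are legitimate: with $\Im t<0$ the expansion $1/\cosh(\beta z)=2\sum_n\chi(n)e^{-n\beta z}$ may be integrated termwise, and G\&R 3.462.1 then yields the closed form
\begin{equation*}
\Gamma(v+1)(2p)^{-(v+1)/2}\sum_{n}\chi(n)\Bigl[e^{(n\beta-ix)^2/(8p)}D_{-v-1}\bigl(\tfrac{n\beta-ix}{\sqrt{2p}}\bigr)+e^{(n\beta+ix)^2/(8p)}D_{-v-1}\bigl(\tfrac{n\beta+ix}{\sqrt{2p}}\bigr)\Bigr],\qquad p=i\hslash t/2m,
\end{equation*}
which is a perfectly good (single-sum) evaluation of the integral. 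The problem is that this is not the statement being proved, and the passage from it to (3.1) is not ``purely bookkeeping''---it is the entire content of the theorem, and the mechanism you propose for it does not work.

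Two concrete obstructions. First, the connection formula $\sqrt{2\pi}\,D_v(w)=\Gamma(v+1)[e^{iv\pi/2}D_{-v-1}(iw)+e^{-iv\pi/2}D_{-v-1}(-iw)]$ needs its two $D_{-v-1}$'s evaluated at $+iw$ and $-iw$ for one and the same $w$; the pair you obtain for a fixed $n$ sits at $(n\beta-ix)/\sqrt{2p}=-iw_n$ and $(n\beta+ix)/\sqrt{2p}=+iw_{-n}$, where $w_{\pm n}=(x\pm in\beta)/\sqrt{2p}$, and these are not negatives of each other; re-indexing via $\chi(-n)=-\chi(n)$ flips a sign but never places $D_{-v-1}(+iw_n)$ into your sum alongside $D_{-v-1}(-iw_n)$. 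Second, and more fundamentally, the first sum in (3.1) carries the factors $(n\pi i/2)^v$, $e^{-nx\pi/2}$ and $e^{in^2\pi^2\hslash t/8m}$, i.e.\ precisely the data of the poles of $1/\cosh$ at $z=in\pi/(2\beta)$; every quantity produced by your termwise evaluation involves only $n$, $x$ and $p$, with no $\pi$ anywhere, and no algebraic identity among parabolic cylinder functions will manufacture $e^{-nx\pi/2}$ out of $D_{-v-1}((n\beta\mp ix)/\sqrt{2p})$. Converting the exponential expansion of $\mathrm{sech}$ into its pole (partial-fraction) expansion is a Poisson-summation-type step, and that is exactly the analytic ingredient your sketch omits; the paper gets both families of terms at once because its contour integral sees both kinds of poles. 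To salvage your approach you would need either to insert that summation formula explicitly, or to present the single $D_{-v-1}$ sum as an alternative evaluation and prove separately that it equals the right side of (3.1). A minor further point: the hypothesis $v\neq 2n+1$ enters the paper's argument through the nonvanishing of $e^{i\pi v}+1$ in the symmetrization $\phi(z)=(z^v+(-z)^v)/\cosh(\beta z)$, not through poles of $\Gamma(v+1)$ or of $\sin(v\pi/2)$.
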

\begin{proof}Consider the momentum space given by $\phi(z)=(z^v+(-z)^v)/\cosh(\beta z)=(e^{\pi iv}+1)z^v/\cosh(\beta z),$ where $v\neq 2n+1,$ $n\in\mathbb{N}.$ Then $\phi(z)=\phi(-z),$ and we get a Fourier cosine integral. If we put $\hslash t/2m=y$ and take the Laplace transform of the left side of (3.1) relative to $y$ we find that we have $(e^{\pi iv}+1)$ times the integral
\begin{equation}\frac{1}{2i}\int_{-\infty}^{\infty} \frac{z^v}{\cosh(\beta z) (-ip+z^2)}e^{ixz}dz.\end{equation}
If we evaluate this integral in the same manner as in [4] using the calculus of residues, we find that (3.2) is equal to 
\begin{equation} \pi\left(\frac{(-\sqrt{pi})^ve^{-(x/\sqrt{i})\sqrt{p}}}{2\sqrt{pi}\cos{(\sqrt{p/i})}}+\sum_{n}\chi(n)\frac{(n\pi i/2)^ve^{-na\pi/2}}{p-in^2\pi^2/4}\right).\end{equation}
Now from tables [5], we know that for $\Re(p)>0,$
\begin{equation}\int_{0}^{\infty}e^{-pt}t^{-v/2-1/2}e^{-\alpha/(8t)}D_{v}(\sqrt{\alpha/(2t)})dt=2^{v/2}\sqrt{\pi}p^{v/2-1/2}e^{-\sqrt{\alpha p}}.\end{equation}
So taking the inverse Laplace transform of (3.3) gives the theorem, after noting (3.4) and the inverse Laplace transform of $(p-a)^{-1}$ is $e^{at}.$
\end{proof}
Note that when $v=0$ we have Glassers' [4, eq.(9), $b=i\hslash t/2m$] after using the fact that $D_0(z)=e^{-\frac{z^2}{4}}.$ If $n\in\mathbb{N},$ then $D_n(z)=2^{-n/2}e^{-z^2/4}H_n(\frac{z}{\sqrt{2}})$ where $H_n(z)$ are the Hermite polynomials. Hence, if we select $v=2n,$ Theorem 3.1 reduces to a series involving the Hermite polynomials. \par It should be noted that another formula may be obtained using the integral formula [5], $\Re(\mu)>-1,$ $\Re(r)>0,$ $a'>0,$
$$ \int_{0}^{\infty}z^{\mu}e^{-\gamma z-rz^2}\cos(a'z)dz$$
$$=\frac{1}{2(2r)^{(\mu+1)/2}}e^{\frac{\gamma^2-a'^2}{8r}}\Gamma(\mu+1)\left(e^{-ia'\gamma/(4r)}D_{-\mu-1}((\gamma-ia')/(\sqrt{2r}))+e^{ia'\gamma/(4r)}D_{-\mu-1}((\gamma+ia')/(\sqrt{2r}))\right). $$

\section{two-particle position-space wave function}
We shall consider a two-particle positions-space wave function that has a close relationship to the single-particle wave function discussed in [4]. See [10, pg. 384] for a discussion on Multiparticle systems. We shall recall the two-particle Hamiltonian operator
$$ \hat{H}=-\frac{\hslash^2}{2m_1}\frac{\partial^2}{\partial x_1^{2}}-\frac{\hslash^2}{2m_2}\frac{\partial^2}{\partial x_2^{2}}+V(x_1,x_2).$$ (We shall only concern ourselves with potential $V(x_1,x_2)=0.$) Then we may write the Schr$\ddot{o}$dinger equation in the form $i\hslash \frac{\partial\psi}{\partial t}=\hat{H}\psi.$ We may also write the solution as 
\begin{equation}\psi(x_1,x_2,t)=\int_{-\infty}^{+\infty}\int_{-\infty}^{+\infty}\phi(z_1,z_2)e^{iz_1x_1+iz_2x_2-i\hslash z_1^2t/2m_1-i\hslash z_2^2t/2m_2}dz_1dz_2.\end{equation} The momentum-space wavefunction $\phi$ corresponding to $\psi(x_1,x_2,0)$ is given by Fourier inversion in the usual way. 
\begin{theorem} For a complex parameter $x,$ define
$$I(x):=\frac{1}{2^2}\frac{\pi\sqrt{m_1m_2}}{\hslash t}e(w_1,w_2,\hslash t/2m_1,\hslash t/2m_2)\int_{0}^{\infty}e^{iz^2(m_1+m_2)/(\hslash 2t)}\frac{\cos(xz/(t\hslash))}{\cosh(\beta'z)}dz,$$
where $$e(w_1,w_2,t_1,t_2)=e^{-(w_1^2/(4it_1)+w_2^2/(4it_2))}.$$
Then
\begin{equation} \int_{0}^{\infty}\int_{0}^{\infty}\cos(z_1w_1)\cos(z_2w_2)e^{-i\hslash z_1^2t/2m_1-i\hslash z_2^2t/2m_2}\left(\frac{\pi}{\beta'}\frac{\cosh(\frac{\pi z_1}{2\beta'})\cosh(\frac{\pi z_2}{2\beta'})}{\cosh(\frac{\pi z_1}{\beta'})+\cosh(\frac{\pi z_2}{\beta'})}\right)dz_1dz_2\end{equation} $$=I_{\beta'}(w_1m_1+w_2m_2)+I_{\beta'}(w_1m_1-w_2m_2).$$
\end{theorem}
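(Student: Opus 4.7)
The plan is to reduce the double integral on the left to $I(A_+) + I(A_-)$ with $A_\pm := m_1 w_1 \pm m_2 w_2$, by simplifying the kernel, changing variables to diagonalize the quadratic exponent, and then applying a Fourier-type identity to the sech factor.

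First, I would simplify the coupling kernel via the hyperbolic identities $\cosh(X/2)\cosh(Y/2) = \tfrac12[\cosh((X+Y)/2) + \cosh((X-Y)/2)]$ and $\cosh(X) + \cosh(Y) = 2\cosh((X+Y)/2)\cosh((X-Y)/2)$, applied with $X = \pi z_1/\beta'$ and $Y = \pi z_2/\beta'$. The ratio in the integrand collapses to
$$\frac{\pi}{4\beta'}\left[\frac{1}{\cosh(\pi(z_1+z_2)/(2\beta'))} + \frac{1}{\cosh(\pi(z_1-z_2)/(2\beta'))}\right].$$
Extending the integration to $\mathbb{R}^2$ (the summed kernel is even in each $z_i$), using the product-to-sum identity on $\cos(z_1 w_1)\cos(z_2 w_2)$, and exploiting the $z_2 \mapsto -z_2$ symmetry reduces the kernel to twice the single term $1/\cosh(\pi(z_1-z_2)/(2\beta'))$.

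Next, I would introduce $u = z_1 - z_2$, $v = m_2 z_1 + m_1 z_2$ (Jacobian $m_1 + m_2$), so that the quadratic exponent diagonalizes as
$$\frac{\hslash t}{2m_1} z_1^2 + \frac{\hslash t}{2m_2} z_2^2 = \frac{\hslash t\, u^2}{2(m_1 + m_2)} + \frac{\hslash t\, v^2}{2 m_1 m_2(m_1+m_2)}.$$
The two cosine factors become $\cos((A_\mp u + B_\pm v)/(m_1 + m_2))$ with $B_\pm := w_1 \pm w_2$, and expanding cosine of a sum kills the sine-cross-terms by parity in $v$. The integral therefore factors into a Fresnel integral in $v$, which evaluates to $\sqrt{2\pi m_1 m_2(m_1+m_2)/(i\hslash t)}\, e^{iB_\pm^2 m_1 m_2/(2\hslash t(m_1+m_2))}$, times a single integral in $u$ carrying the sech. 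The algebraic identity $(w_1^2 m_1 + w_2^2 m_2)(m_1 + m_2) - B_\pm^2 m_1 m_2 = A_\mp^2$ rewrites the Fresnel phase as $e(w_1, w_2, \hslash t/2m_1, \hslash t/2m_2)$ times a residual $\exp(-iA_\mp^2/(2\hslash t(m_1 + m_2)))$.

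Finally, the residual phase combines with the $u$-integrand through completion of squares on $-i\hslash t u^2/(2(m_1+m_2)) + iA_\mp u/(m_1+m_2)$. Expanding $1/\cosh(\pi u/(2\beta'))$ via its Fourier representation $(\beta'/\pi)\int_{\mathbb{R}} e^{i\alpha u}/\cosh(\beta'\alpha)\, d\alpha$ and performing the resulting Gaussian in $u$ converts the $u$-integral into $2\int_0^\infty e^{i\alpha^2(m_1+m_2)/(2\hslash t)}\cos(A_\mp \alpha/(\hslash t))/\cosh(\beta'\alpha)\, d\alpha$, which is exactly the integral inside $I(A_\mp)$. Summing the two branches (one for $A_+$ and one for $A_-$) assembles into $I(A_+) + I(A_-)$. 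The main obstacle is the bookkeeping of constants in this last step: confirming that the Fresnel prefactor, the Jacobian $1/(m_1+m_2)$, and the $\sqrt{\pi/(i\sigma)}$ produced by the Fourier-sech exchange combine to give exactly the $\pi\sqrt{m_1m_2}/(4\hslash t)$ factor in the definition of $I(x)$; the remainder of the argument is routine hyperbolic and Gaussian manipulation.
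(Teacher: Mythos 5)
Your argument is sound and reaches the right answer, but it takes a genuinely different route from the paper. The paper's proof is essentially a two-line Parseval/Fubini argument: it recognizes the hyperbolic kernel as the tabulated cosine transform $\int_{0}^{\infty}\cos(z_1z)\cos(z_2z)\,\mathrm{sech}(\beta'z)\,dz$ (equation (4.3)), inserts that representation, swaps the order of integration, and then evaluates the now-decoupled $z_1$- and $z_2$-integrals by the standard Gaussian--cosine formula $\int_{0}^{\infty}\cos(az)\cos(bz)e^{-\eta z^2}dz=\frac14\sqrt{\pi/\eta}\,(e^{-(a-b)^2/(4\eta)}+e^{-(a+b)^2/(4\eta)})$; the surviving single integral over $z$ is then split into the two $I$-terms using $\cosh x\cosh y=\frac12\cosh(x+y)+\frac12\cosh(x-y)$. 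You instead decompose the kernel algebraically into $\mathrm{sech}(\pi(z_1\pm z_2)/(2\beta'))$, diagonalize the quadratic phase with the linear change of variables $(u,v)=(z_1-z_2,\,m_2z_1+m_1z_2)$, perform the Fresnel integral in $v$, and only at the very end re-introduce an auxiliary integration variable via the Fourier representation of $\mathrm{sech}$. The two proofs are in a sense dual --- the paper introduces the auxiliary $z$-variable at the outset, you introduce it at the end --- and your intermediate identities all check out ($(w_1^2m_1+w_2^2m_2)(m_1+m_2)-B_\pm^2m_1m_2=A_\mp^2$, the cancellation of the residual phase $e^{-iA_\mp^2/(2\hslash t(m_1+m_2))}$ against the completed square, the reappearance of $\cos(A_\mp\alpha/(\hslash t))$ and $e^{i\alpha^2(m_1+m_2)/(2\hslash t)}$). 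What the paper's route buys is brevity and the avoidance of the change of variables and the final Fourier--Gaussian exchange; what yours buys is that it makes transparent why the center-of-mass combinations $m_1w_1\pm m_2w_2$ and the total mass $m_1+m_2$ appear, which the paper's computation produces only after the fact. The constant bookkeeping you flag does close up (your prefactor assembles to $\pi\sqrt{m_1m_2}/(4\hslash t)$ up to the same factor of $\sqrt{i}\cdot\sqrt{i}$ that the paper itself silently absorbs when writing $\sqrt{t_1t_2}$ in its equation (4.7)), and, like the paper, you should at least remark that the conditionally convergent Fresnel integrals and the interchanges of integration order are justified by giving $t$ a small negative imaginary part.
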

\begin{proof} First we recall two integral formulas from tables [5]
\begin{equation} \int_{0}^{\infty} \frac{\cos(w_1z)\cos(w_1z)}{\cosh(\beta' z)}dz =\frac{\pi}{\beta'}\frac{\cosh(\frac{\pi w_1}{2\beta'})\cosh(\frac{\pi w_2}{2\beta'})}{\cosh(\frac{\pi w_1}{\beta'})+\cosh(\frac{\pi w_2}{\beta'})},\end{equation}
valid for $|\Im(w_1)|<\Re(\beta'),$ $w_2>0,$ and
\begin{equation} \int_{0}^{\infty}\cos(az)\cos(bz)e^{-\eta z^2}dz=\frac{1}{4}\sqrt{\frac{\pi}{\eta}}\left(e^{-(a-b)^2/(4\eta)}+e^{-(a+b)^2/(4\eta)}\right).
\end{equation}
Let $t_1>0,$ $t_2>0,$ then we can compute that
\begin{align}&\int_{0}^{\infty}\int_{0}^{\infty}\cos(w_1z_1)\cos(w_2z_2)e^{-it_1z_1^2-it_2z_2^2}\left(\frac{\pi}{\beta'}\frac{\cosh(\frac{\pi z_1}{2\beta'})\cosh(\frac{\pi z_2}{2\beta'})}{\cosh(\frac{\pi z_1}{\beta'})+\cosh(\frac{\pi z_2}{\beta'})}\right)dz_1dz_2\\
&=\int_{0}^{\infty}\int_{0}^{\infty}\cos(w_1z_1)\cos(w_2z_2) e^{-it_1z_1^2-it_2z_2^2}\left(\int_{0}^{\infty} \frac{\cos(z_1z)\cos(z_2z)}{\cosh(\beta' z)}dz \right)dz_1dz_2\\
&=\frac{1}{2^4}\frac{\pi }{\sqrt{t_1t_2}}\int_{0}^{\infty}\frac{(e^{-(w_1-z)^2/(4it_1)}+e^{-(w_1+z)^2/(4it_1)})(e^{-(w_2-z)^2/(4it_2)}+e^{-(w_2+z)^2/(4it_2)})}{\cosh(\beta'z)}dz\\
&=\frac{1}{2^2}\frac{\pi e(w_1,w_2,t_1,t_2)}{\sqrt{t_1t_2}}\int_{0}^{\infty}\frac{e^{-z^2(1/(4it_1)+1/(4it_2))}\cosh(w_1z/(2it_1))\cosh(w_2z/(2it_2))}{\cosh(\beta'z)}dz\\
&=\frac{1}{2^3}\frac{\pi e(w_1,w_2,t_1,t_2)}{\sqrt{t_1t_2}}\int_{0}^{\infty}\frac{e^{-z^2(1/(4it_1)+1/(4it_2))}\cosh((w_1/t_1+w_2/t_2)z/(2i))}{\cosh(\beta'z)}dz\\
&+\frac{1}{2^3}\frac{\pi e(w_1,w_2,t_1,t_2)}{\sqrt{t_1t_2}}\int_{0}^{\infty}\frac{e^{-z^2(1/(4it_1)+1/(4it_2))}\cosh((w_1/t_1-w_2/t_2)z/(2i))}{\cosh(\beta'z)}dz,
\end{align}
where $e(w_1,w_2,t_1,t_2)=e^{-(w_1^2/(4it_1)+w_2^2/(4it_2))}.$
In line (4.6) we have applied (4.3), and in line (4.7) we have applied (4.4). In line (4.9)--(4.10) we use the elementary fact that $\cosh(x)\cosh(y)=\frac{1}{2}\cosh(x+y)+\frac{1}{2}\cosh(x-y).$ Now set $t_1=\hslash t/2m_1,$ and $t_2=\hslash t/2m_2.$ Then we find that (4.9)--(4.10) is
$$=\frac{1}{2^2}\frac{\pi\sqrt{m_1m_2}}{\hslash t}e(w_1,w_2,\hslash t/2m_1,\hslash t/2m_2)\int_{0}^{\infty}\frac{e^{iz^2(m_1+m_2)/(\hslash 2t)}\cosh((w_1m_1+w_2m_2)z/(it\hslash))}{\cosh(\beta'z)}dz$$ $$+\frac{1}{2^2}\frac{\pi\sqrt{m_1m_2}}{\hslash t}e(w_1,w_2,\hslash t/2m_1,\hslash t/2m_2)\int_{0}^{\infty}e^{iz^2(m_1+m_2)/(\hslash 2t)}\frac{\cosh((w_1m_1-w_2m_2)z/(it\hslash))}{\cosh(\beta'z)}dz.$$
This is now easily seen to be equivalent to the theorem.
\end{proof}

1390 Bumps River Rd. \\*
Centerville, MA
02632 \\*
USA \\*
E-mail: alexpatk@hotmail.com

\begin{thebibliography}{9}
\bibitem{ConcreteMath} 
G. Andrews, R. Askey, and R. Roy. \emph{Special Functions,} volume 71 of Encyclopedia of Mathematics and its Applications. Cambridge University Press, New York, 1999.
\bibitem{ConcreteMath}
G. Andrews and B. C. Berndt, \emph{Ramanujan's Lost Notebook. Part IV,} Springer 2013.
\bibitem{ConcreteMath}  J.W.L. Glaisher, \emph{On the summation by definite integrals of geometric series of the second and higher order,} The Quarterly Journal of Pure and Applied Mathematics, 238--343, 1871.
\bibitem{ConcreteMath} M. L. Glasser,  \emph{Time evolution of a non-gaussian wave packet,} Physics Letters A, Volume 76A, Issues 3--4, 1980, pg. 219--220.
\bibitem{ConcreteMath}  I. S. Gradshteyn and I. M. Ryzhik. \emph{Table of Integrals, Series, and Products.} Edited by A.Jeffrey and D. Zwillinger. Academic Press, New York, 7th edition, 2007.
\bibitem{ConcreteMath} F. W. J. Olver. \emph{Asymptotics and Special Functions.} Academic Press, New York,. 1974
\bibitem{ConcreteMath} A. Patkowski, \emph{On Integrals Associated with the free particle wave packet,} Arxiv.org, June 2016.
\bibitem{ConcreteMath} A. Patkowski, M. Wolf,  \emph{Some Remarks on Glaisher-Ramanujan Type Integrals,} Computational Methods in Science and Technology v.22 (2) 103--108, (2016)
\bibitem{ConcreteMath} S. Ramanujan, \emph{Some definite integrals connected with Gauss's sums,} Messenger of Mathematics, XLIV, 1915, 75--85.
\bibitem{ConcreteMath}
Richard W. Robinett, \emph{Quantum Mechanics: Classical Results, Modern Systems, and Visualized Examples,} Second Edition, Oxford University Press, 2006.


\end{thebibliography}
\end{document}